\documentclass[11pt]{amsart}
\usepackage{amsmath}
\usepackage{amsfonts}
\usepackage{amssymb}

\usepackage{hyperref}
\hypersetup{colorlinks=true,
	linkcolor=blue,
	filecolor=magenta,
	citecolor=red,
	urlcolor=cyan,}
\newtheorem{theorem}{Theorem}
\theoremstyle{plain}

\numberwithin{equation}{section}
\numberwithin{lemma}{section}
\numberwithin{theorem}{section}
\numberwithin{corollary}{section}
\numberwithin{proposition}{section}
\numberwithin{remark}{section}

\begin{document}
	\title[The structure of $U(F(C_3 \times D_{10}))$]{The structure of the unit group of the group algebra $F(C_3 \times D_{10})$}
	\author{Meena Sahai}
	\address{Department of Mathematics and Astronomy, University of Lucknow, Lucknow, U.P. 226007, India.}
	\email{meena\_sahai@hotmail.com}
	\author{Sheere Farhat Ansari}
	\address{Department of Mathematics and Astronomy, University of Lucknow, Lucknow, U.P. 226007, India.}
\email{sheere\_farhat@rediffmail.com}
	
	\footnote { The financial assistance provided  to the second author in the form of a Senior Research Fellowship from the University Grants Commission, INDIA is gratefully acknowledged.}
	
	\maketitle

	\begin{abstract} 
		Let $D_{n}$ be the dihedral group of order $n$.	The structure of the unit group $U(F(C_3 \times D_{10}))$ of the group algebra $F(C_3 \times D_{10})$ over a finite field $F$ of characteristic $3$ is given in \cite{sh13}. In this article, the structure of $U(F(C_3 \times D_{10}))$ is obtained over any finite field $F$ of characteristic $p \neq 3$.
	\end{abstract}
	
	\subjclass{Mathematics Subject Classification 2010: 16U60; 20C05.}

	\keywords{Keywords: Group Rings, Unit Groups, Dihedral Groups, Cyclic Groups}
	\section{\textbf{Introduction}}
	Let  $U(FG)$ be the group of invertible elements of the group algebra $FG$ of a group $G$ over a field $F$. The study of units and their properties is one of the most challenging problems in the theory of group rings.  Explicit calculations in $U(FG)$ are usually difficult, even when $G$ is fairly small and $F$ is a finite field.  The results obtained in this direction are also useful for the investigation of the Lie properties of group rings, the isomorphism problem and other open questions in this area, see \cite{BK1}.
	
Let $J(FG)$ be the Jacobson radical of $FG$ and let $V=1+J(FG)$. The $F$-algebra $FG/J(FG)$ is semisimple whenever $G$ is a finite group.	It is known from the Wedderburn structure theorem that $$FG/J(FG) \cong \oplus_{i=1}^r M(n_i, K_i)$$ where $r$ is the number of non-isomorphic irreducible $FG$ modules, $n_i \in \mathbb{N}$ and $K_i$'s are finite dimensional division algebras over $F$. In this context a result by Ferraz \cite[Theorem 1.3 and Prop 1.2]{F1}) is very useful in determining the  Wedderburn decomposition of $FG/J(FG)$.

	If $FG$ is semisimple, then  $J(FG)=0$ and  by \cite[Prop 3.6.11]{Miles}, $$FG \cong F(G/G') \oplus \Delta(G, G')$$ where $F(G/G')$ is the sum of all the commutative simple components of $FG$, whereas $\Delta(G, G')$ is the sum of all the non-commutative simple components of $FG$.	We conclude that, if $FG$ is semisimple, then $$FG \cong F(G/G') \oplus (\oplus_{i=1}^lM(n_i, F_i)).$$
	Now, if $dim_F(Z(FG))=r$ and if the number of commutative simple components is $s$, then $l \leq r-s$.
	
		Many authors  \cite{sh10, J5, J12, N8, FM, sh4, sh13, T2} have studied the structure of $U(FG)$ for a finite group $G$ and for a finite field $F$. The structure of  $U(F(C_3 \times D_{10}))$ for $p=3$ is given in \cite{sh13}. 
	In this article, we provide an explicit description for the Wedderburn decomposition of $FG/J(FG)$, $G=C_3 \times D_{10}$ and $F$ a finite field of characteristic $p \neq 3$, using the theory developed by Ferraz~\cite{F1}. With the help of this description we obtain the structure of $U(F(C_3 \times D_{10}))$.

	 Our notations are same as in \cite{sh10, sh4}.
\section{\textbf{Structure of $U(F(C_3 \times D_{10}))$}}
\begin{theorem}
	Let $F$ be a finite field of characteristic $p$ with $|F|=q=p^k$ and let $G=C_3 \times D_{10}$. 
	\begin{enumerate}	
		\item If $p=2$, then $U(FG) \cong$
		
$\begin{cases}
	C_{2}^{3k} \rtimes\big(	C_{2^k-1}^{3} \times GL(2,F)^6\big), & \text {if $q \equiv  1, 4$  mod $15$;}\\
		C_{2}^{3k} \rtimes	\big(C_{2^k-1} \times C_{2^{2k}-1} \times GL(2, F_2)^3\big), & \text{ if $q \equiv  2, -7$  mod $15$.}\\
	\end{cases}$
	
		\item If $p=5$, then
		
	$U(FG) \cong V \rtimes
	\begin{cases}
	C_{5^k-1}^6, & \text{if $q \equiv 1$ mod $6$};\\
	C_{5^k-1}^2 \times C_{5^{2k}-1}^2, & \text{if $q \equiv -1$ mod $6$}.
	\end{cases}$
	
	where
	$V \cong (C_5^{15k} \rtimes C_5^{6k}) \rtimes C_5^{3k}$ and  $Z(V) \cong C_5^{9k}$.
	
\item	If $p>5$, then $U(FG) \cong$

$\begin{cases}
	C_{p^k-1}^{6} \times GL(2,F)^6, & \text {if $q \equiv  1, -11$  mod $30$;}\\
	C_{p^k-1}^2 \times C_{p^{2k}-1}^2 \times GL(2, F)^2 \times GL(2, F_2)^2, & \text{ if $q \equiv -1, 11$  mod $30$;}\\
	C_{p^k-1}^6 \times GL(2, F_2)^3, & \text{ if $q \equiv  7, 13$  mod $30$;}\\
	C_{p^k-1}^2 \times C_{p^{2k}-1}^2 \times GL(2, F_2)^3, & \text{ if $q \equiv  -7, -13$  mod $30$.}\\
	\end{cases}$
	\end{enumerate}
\end{theorem}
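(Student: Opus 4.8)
\emph{Proof proposal.} The plan is to handle the three ranges $p=2$, $p=5$ and $p>5$ separately, in each case first pinning down the Wedderburn decomposition of the semisimple quotient $FG/J(FG)$ (with the help of Ferraz's theorem \cite{F1}) and then lifting the unit group of that quotient back to $U(FG)$. Throughout I would use $FG\cong FC_3\otimes_F FD_{10}$ together with the fact that, $F$ being perfect, $FG/J(FG)$ is separable over $F$; by the Wedderburn--Malcev theorem $FG=S\oplus J(FG)$ for a subalgebra $S\cong FG/J(FG)$, and since an element of $FG$ is a unit precisely when its image in $FG/J(FG)$ is a unit, one gets $U(FG)\cong V\rtimes U(S)\cong V\rtimes U(FG/J(FG))$ with $V=1+J(FG)$. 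So the problem splits into (i) computing $FG/J(FG)$ and (ii) computing $V$; when $p>5$ only (i) is needed, and once the simple components are known the answer is read off from $U(M(n,F_i))\cong GL(n,F_i)$ and $U(F_i)\cong C_{|F_i|-1}$.

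For $p>5$, $FG$ is semisimple. Here $G'=C_5$ and $G/G'\cong C_6$, so $F(G/G')=FC_6$ supplies the commutative components, whose number and fields depend only on $q\bmod 6$, hence (as $q$ is odd) on $q\bmod 3$. The non-commutative part $\Delta(G,G')$ becomes, over $\bar F$, six copies of $M(2,\bar F)$, coming from the tensor products of the two $2$-dimensional characters of $D_{10}$ with the three linear characters of $C_3$; grouping these six characters into $q$-Galois orbits --- which is governed by whether $\sqrt5$ and a primitive cube root of unity lie in $F$, i.e. by $q\bmod 5$ and $q\bmod 3$ --- produces exactly the four patterns $M(2,F)^6$, $M(2,F)^2\oplus M(2,F_2)^2$, $M(2,F_2)^3$ (arising in two ways), with $F_2=\mathbb F_{q^2}$. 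Intersecting the two congruence conditions gives the four classes modulo $30$ in the statement, and Ferraz's theorem (or a direct Brauer-character count) confirms the multiplicities.

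For $p=2$, $\widehat{C_5}=\sum_i a^i$ is a central idempotent of $FD_{10}$ (since $5$ is a unit), and $FD_{10}=FD_{10}\widehat{C_5}\oplus FD_{10}(1-\widehat{C_5})\cong FC_2\oplus B$, where $B$ is semisimple, equal to $M(2,F)^2$ or $M(2,F_2)$ according as $\sqrt5\in F$ or not. Hence $J(FD_{10})=J(FC_2)$ is one-dimensional with square zero, so $J(FG)=FC_3\otimes J(FD_{10})$ has $F$-dimension $3$ and square zero, and therefore $V=1+J(FG)$ is elementary abelian, $V\cong C_2^{3k}$. It then remains to decompose $FG/J(FG)=FC_3\otimes(F\oplus B)$: $FC_3$ is $F^3$ or $F\oplus F_2$ according to $q\bmod 3$, and carrying out the tensor product (using $F_2\otimes_F F_2\cong F_2\times F_2$) gives, for the only residues that can occur, namely $q\equiv1,4\pmod{15}$ and $q\equiv2,8\pmod{15}$, the algebras $F^3\oplus M(2,F)^6$ and $F\oplus F_2\oplus M(2,F_2)^3$, which yields the two displayed cases.

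For $p=5$, the Sylow $5$-subgroup $P=\langle a\rangle$ is normal in $G$, so $J(FG)=\Delta(P)FG$ and $FG/J(FG)\cong F[G/P]\cong FC_6$, giving the abelian factor $C_{5^k-1}^6$ or $C_{5^k-1}^2\times C_{5^{2k}-1}^2$ depending on $q\bmod 6$ (equivalently on the parity of $k$). Since $\Delta(P)^5=0$ we get $J(FG)^5=0$, so $V=1+J(FG)$ has order $q^{24}=5^{24k}$ and exponent $5$. To identify $V$ I would use $J(FG)=FC_3\otimes J(FD_{10})$ and reduce to the analysis of $W=1+J(FD_{10})$ (of order $5^{8k}$), computing $J(FD_{10})=\Delta(C_5)FD_{10}$ explicitly via $FC_5=F[t]/(t^5)$ with $b$ acting by $t\mapsto -t(1-t+t^2-t^3)$, then its filtration $J\supset J^2\supset J^3\supset J^4\supset 0$, its commutator subgroup and its centre, so as to exhibit $W\cong(C_5^{5k}\rtimes C_5^{2k})\rtimes C_5^{k}$; tensoring with $FC_3$ (and noting that the isomorphism type of this iterated semidirect product scales correctly when $F$ is replaced by $F_2$, which takes care of $q\equiv2\pmod3$) then gives $V\cong(C_5^{15k}\rtimes C_5^{6k})\rtimes C_5^{3k}$ with $Z(V)\cong C_5^{9k}$. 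The main obstacle is precisely this last step: extracting the fine group-theoretic structure of the non-abelian $p$-group $W$ --- its exact commutator subgroup and centre, and a normal series realising the claimed iterated extension --- from the multiplication table of $J(FD_{10})$; the remaining work is routine bookkeeping with Galois orbits and tensor products.
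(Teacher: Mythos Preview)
Your overall architecture---split off $V=1+J(FG)$ via Wedderburn--Malcev and then determine $FG/J(FG)$---is exactly the paper's, and your treatment of $p>5$ and $p=2$ reaches the same conclusions. The genuine methodological difference is that you exploit the tensor factorisation $FG\cong FC_3\otimes_F FD_{10}$ throughout, whereas the paper works intrinsically in $FG$. Concretely: for $p=2$ the paper identifies $J(FG)$ as the annihilator $\mathrm{Ann}(\widehat{T_2})$ by solving a linear system in the group-algebra coefficients and then invokes Ferraz's count of cyclotomy classes to list the simple components; you instead split $FD_{10}=FC_2\oplus B$ by the idempotent $\widehat{C_5}$ and tensor with the known decomposition of $FC_3$, which is quicker and more transparent. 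For $p=5$ the contrast is sharper. The paper never reduces to $D_{10}$: it builds, inside $V\subset FG$, explicit one-parameter subgroups $R$, $S$, $T$, $M$ (written as polynomials in $x,y,z$), checks by hand the conjugation formulae $s^t\in S$, $n^m\in C_V(R)$, and the intersection conditions, thereby exhibiting $V\cong (C_5^{15k}\rtimes C_5^{6k})\rtimes C_5^{3k}$ and then computes $Z(V)$ via $C_V(y)$. Your route---decompose $FC_3$ as $F^3$ or $F\oplus F_2$, hence $FG$ as $(FD_{10})^3$ or $FD_{10}\oplus F_2D_{10}$, so that $V\cong W_F^3$ or $W_F\times W_{F_2}$, and then read off the iterated semidirect structure and the centre from that of $W=1+J(FD_{10})$---is cleaner and reduces the problem to Gildea's result on $F_{5^k}D_{10}$ (reference \cite{J5} in the paper), which already supplies $W\cong(C_5^{5k}\rtimes C_5^{2k})\rtimes C_5^{k}$ and $Z(W)\cong C_5^{3k}$. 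What the paper's approach buys is self-containment: every subgroup is written down explicitly and every relation is verified inside $FG$, so nothing is imported. What your approach buys is economy and conceptual clarity, at the cost of either citing \cite{J5} or redoing essentially the same element-by-element computation one level down; your honest flag that ``extracting the fine group-theoretic structure of $W$'' is the main obstacle is accurate, and it is precisely the work the paper carries out (for $V$ rather than $W$) in its Steps~1--5.
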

\begin{proof}Let $G= \langle x, y, z \mid x^{2}=y^5=z^3=1,  xy=y^4x,  xz=zx, yz=zy \rangle.$ The conjugacy classes in $G$ are:
	\begin{align*}
	[z^i]&=\{z^i\} \text { for } i=0, 1, 2;\\
	[yz^i]&=\{y^{\pm 1}z^i\} \text { for } i=0, 1, 2;\\
	[y^2z^i]&=\{y^{\pm 2}z^i\} \text { for } i=0, 1, 2;\\
	[xz^i]&=\{xz^i, xy^{\pm 1}z^i, xy^{\pm 2}z^i\} \text { for } i=0, 1, 2.
	\end{align*}
	\begin{enumerate}
\item $p=2$. Clearly, $\widehat{T_2}=1+x\widehat{y}$.

Let $\alpha= \sum_{k=0}^{1}\sum_{j=0}^{2}\sum_{i=5(j+3k)}^{5(j+3k)+4}a_{i}x^ky^{i-5(j+3k)}z^j$. If $\alpha\widehat{T_2}=0$, then we have
$$\alpha +\sum_{k=0}^{1}\sum_{j=0}^{2}\sum_{i=5(j+3k)}^{5(j+3k)+4}a_ix^{k+1} \widehat{y}z^j =0.$$
		
		For $k=0, 1, 2$ and $i=0, 1, 2, 3, 4$ this yields the following equations:
		\begin{align*}
		a_{5k+i}+\sum_{j=0}^{4}a_{5k+j+15}&=0\\
		a_{5k+15+i}+\sum_{j=0}^{4}a_{5k+j}&=0.
		\end{align*}
		After simplification we get, $a_{5k}=a_{5k+i}=a_{5k+i+15}$ for $i=0, 1, 2, 3, 4$ and $k=0,1, 2$. Hence
$$Ann(\widehat{T_2})=\{\sum_{i=0}^{2}\beta_{i}(1+x)\widehat{y}z^i \mid  \beta_i\in F\}.$$
	  Since $z, \widehat{y} \in Z(FG)$,  $Ann^2(\widehat{T_2})=0$ and  $Ann(\widehat{T_2}) \subseteq J(FG)$. Thus by \cite[Lemma 2.2]{T2}, $J(FG) = Ann(\widehat{T_2})$ and  $dim_F(J(FG))=3$. Hence $V\cong C_2^{3k}$
	  and by  \cite[Lemma 2.1]{N2}, 
	  	$$U(FG) \cong C_2^{3k} \rtimes U(FG/J(FG)).$$
	Now it only remains to find  the Wedderburn decomposition of $FG/J(FG)$.
	
		As $[1]$, $[y]$, $[y^2]$, $[z]$, $[z^2]$, $[yz]$, $[yz^2]$, $[y^2z]$, and $[y^2z^2]$ are the $2$-regular conjugacy classes of $G$,  $m=15$ and $dim_F(FG/J(FG))=27$. Now the following cases occur:
		\begin{enumerate}
	\item If $q \equiv 1, 4$ mod $15$, then $|S_F(\gamma_g)|=1$ for $g=1$, $y$, $y^2$, $z$, $z^2$, $yz$, $yz^2$, $y^2z$, $y^2z^2$. Consquently, \cite[Theorem 1.3]{F1},  yields nine components in the  decomposition of $FG/J(FG)$.  In view of the dimension requirements, the only possibility is:
$$FG/J(FG) \cong F^3 \oplus M(2,F)^6.$$
			
\item If $q \equiv 2, -7$ mod $15$, then 
		$|S_F(\gamma_g)|=1$ for $g=1$ and $|S_F(\gamma_g)|=2$ for $g=y$, $z$, $yz$, $yz^2$. So, due to the dimension restrictions, we have 
		$$FG/J(FG) \cong F \oplus F_{2} \oplus M(2,F_2)^3.$$
	\end{enumerate}
	
\item $p=5$. If $K=\langle y \rangle$, then $G/K \cong H \cong \langle x, z\rangle \cong C_6$. Thus 
  from the ring epimorphism $\eta : FG \rightarrow FH$, given by 

  	\begin{align*}
\eta (\sum_{j=0}^{2}\sum_{i=0}^{4}y^{i}z^j(a_{i+5j} + a_{i+5j+15}x))= \sum_{j=0}^{2}\sum_{i=0}^{4}z^j(a_{i+5j}+ a_{i+5j+15}x),
\end{align*}
  we get a group epimorphism  $ \phi \colon U(FG) \rightarrow U(FH)$ and $ker \phi \cong 1+J(FG) = V$. Further, we have the inclusion map 
$i \colon U(FH) \rightarrow U(FG)$ such that $\phi i =1_{U(FH)}$. Thus $U(FG) \cong V \rtimes U(FC_6)$.
  
  The structure of $U(FC_6)$ is given in \cite[Theorem 4.1]{sh4}.
  
If $v =\sum_{j=0}^2\sum_{i=0}^{4}y^iz^j(a_{i+5j}+a_{i+5j+15}x) \in U(FG)$, then $v \in V$ if and only if $\sum_{i=0}^{4}a_i=1$ and  $\sum_{i=0}^{4}a_{i+5k}=0$ for $k=1, 2, 3, 4, 5$.
Hence
\begin{center}
$V=\{1+\sum_{j=0}^2\sum_{i=1}^{4}(y^i-1)z^j( b_{i+4j}+ b_{i+4j+12}x)\mid b_i \in F\}$
\end{center}
and $|V|=5^{24k}$. Since, $J(FG)^5=0$,  $V^5=1$.

Now we show that $V\cong (C_{5}^{15k} \rtimes C_5^{6k} )\rtimes C_5^{3k}$. The proof is split into the following steps:

{\bf Step 1:} Let $R=\{1+ ay(1-y)^3x \mid a \in F\} \subseteq V$. Then $R\cong C_5^{k}$.

If $$r_1=1+ay(1-y)^3x\in R$$ and $$r_2=1+by(1-y)^3x\in R$$ where $a, b\in F$, then $$r_1r_2=1+(a+b)y(1-y)^3x\in R.$$ Therefore, $R$ is an abelian subgroup of $V$ of order $5^k$. Hence $R\cong C_5^k$.

{\bf Step 2:} $|C_V(R)|=5^{21k}$, where $C_V(R)=\{v \in V \mid r^v=r \text { for all } r \in R \}$.

Let $$r=1+ay(1-y)^3x \in R$$ and $$v=1+\sum_{j=0}^{2} \sum_{i=1}^{4}(y^i-1)z^j(b_{i+4j} +b_{i+4j+12}x) \in V$$ where $a, b_i \in F$. Then
$v = 1+  v_1 + v_2x$, $v_1 = \sum_{j=0}^{2}\sum_{i=1}^{4}b_{i+4j}(y^i-1)z^j$ and $v_2 =  \sum_{j=0}^{2}\sum_{i=1}^{4}b_{i+4j+12}(y^i-1)z^j$. So $v^{-1 } = v^4=1+4v_1+4v_2x$ mod $(y-1)^2FG$. Thus
\begin{align*}
r^v=1+ v^{-1}ay(1-y)^3xv = r+2a\widehat{y}\sum_{j=0}^2\sum_{i=1}^{4}ib_{i+4j}z^jx.
\end{align*}
Thus $r^v=r$ if and only if $\sum_{i=1}^{4}ib_{i+4j}=0$ for $j=0, 1, 2$. Hence
\begin{align*}
C_V(R)=\{1+\sum_{j=0}^2\sum_{i=1}^{3}[(y^i-1)+i(y^4-1)]c_{i+3j}z^j&\\  +\sum_{j=0}^2\sum_{i=1}^{4}(y^i-1)c_{i+4j+9}z^jx \mid c_i \in F\}
\end{align*}
and $|C_V(R)|=5^{21k}$.

{\bf Step 3:} $C_V(R)\cong C_{5}^{15k}\rtimes C_5^{6k}$.

Consider the sets $$S=\{1 + y^3(y-1)^2[yb_1+y(y+2)b_2+b_3+(yb_4+(y+1)^2b_5)x]\}$$
and
 $$T=\{1+y^3(y-1)[(y-1)(yc_1+(y+1)^2c_2) + (yc_3 + (y^2 +y+1)c_4)x]\}$$
where $b_{1+j}=\sum_{i=0}^2p_{i+3j}z^i$ for $j=0, 1, 2, 3, 4$ and $c_{1+j}=\sum_{i=0}^2q_{i+3j}z^i$ for $j=0, 1, 2, 3$ . With some computation it can be shown that $S$ and $T$ are abelian subgroups of $C_V(R)$. So $S \cong C_{5}^{15k}$ and $T \cong C_{5}^{12k}$.

Now, let 
$$s=1 + y^3(y-1)^2[yb_1+y(y+2)b_2+b_3+(yb_4+(y+1)^2b_5)x] \in S$$
 and
 $$t=1+y^3(y-1)[(y-1)(yc_1+(y+1)^2c_2)+(yc_3 + (y^2 +y+1)c_4)x] \in T.$$ Then
\begin{align*}
 s^t=&1+y^3(y-1)^2\{yb_1+y(y+2)b_2+b_3+k_1y^3(1-y)\\
 &+[yb_4+(y+1)^2b_5+(y-1)^2(k_2+k_3)]x\} \in S
 \end{align*}
where
\begin{align*}
k_1&=(c_4+2c_3)(b_4-b_5),  k_2=(c_4+2c_3)(b_2-b_3)\\
 k_3&=2(c_4^2-c_3c_4-c_3^2)(b_4-b_5).
 \end{align*}
  Let $$U=S \cap T=\{1+y^3(y-1)^2[yc_1+(y+1)^2c_2]\}$$
  where $c_{1+j}=\sum_{i=0}^{2}q_{i+3j}z^i$ for $j=0, 1$. Thus $U \cong C_5^{6k}$.
   So for some subgroup $W \cong C_5^{6k}$ of $T$, $T=U \times W$ and $W\cap S=1$. Hence $C_V(R) \cong S\rtimes W \cong C_{5}^{15k} \rtimes C_5^{6k}$.
   
   {\bf Step 4:} Let $M=\{1+\sum_{j=0}^{2}r_jz^jy(y+1)^2(1-y)(1+x) \mid r_i \in F\} \subseteq V$. Then $M\cong C_5^{3k}$.
   
   Let $$m_1=1+\sum_{j=0}^{2}r_jz^jy(y+1)^2(1-y)(1+x)\in M$$ and $$m_2=1+\sum_{j=0}^{2}s_jz^jy(y+1)^2(1-y)(1+x)\in M$$ where $r_j, s_j\in F$. Then
   \begin{align*}
    m_1m_2=1+\sum_{j=0}^{2}(r_j +s_j)z^jy(y+1)^2(1-y)(1+x)\in M.
    \end{align*}
     Therefore, $M$ is an abelian subgroup of $V$ of order $5^{3k}$. Hence, $M\cong C_5^{3k}$.
   
   {\bf Step 5:} $V\cong C_V(R) \rtimes M$.
   
 Let
   \begin{align*}
    n=&1+\sum_{j=0}^2\sum_{i=1}^{3}[(y^i-1)+i(y^4-1)]c_{i+3j}z^j\\
 & +\sum_{j=0}^2\sum_{i=1}^{4}(y^i-1)c_{i+4j+9}z^jx  \in C_V(R)
    \end{align*}
    and let $$m=1+\sum_{j=0}^{2}r_jz^jy(y+1)^2(1-y)(1+x)\in M$$ 
    where $c_i, r_i\in F$. Then 
    \begin{align*}
    n^m=&1+\sum_{j=0}^2\sum_{i=1}^{3}[(y^i-1)+i(y^4-1)]c_{i+3j}z^j\\
&+\sum_{j=0}^2\sum_{i=1}^{4}(y^i-1)c_{i+4j+9}z^jx +(k_1+k_2x)\in C_V(R)
    \end{align*}
    where
    \begin{align*}
     k_1=&\sum_{j=0}^{2}r_jz^j\{\sum_{k=0}^{2}(c_{10+4k}-c_{11+4k}-c_{12+4k}+c_{13+4k})z^k\\
&+3\sum_{j=0}^{2}r_jz^j\sum_{i=1}^{4}\sum_{k=0}^{2}i(c_{i+4k+9}z^k)\}y(1-y)^3
     \end{align*}
   
   \begin{align*} 
    k_2=&2\sum_{j=0}^{2}r_jz^j\{\sum_{k=0}^{2}(c_{2+3k}-c_{3+3k})z^k(1-y)\\
&-\sum_{j=0}^{2}r_jz^j\sum_{i=1}^{4}\sum_{k=0}^{2}ic_{i+4k+9}z^k\}y(1-y)^3-2\sum_{j=0}^{2}r_jz^j\sum_{i=0}^{4}d_iy^i
    \end{align*}
    where 
    \begin{align*}
    d_0&=\sum_{j=0}^{2}(4c_{10+4j}+4c_{11+4j}+c_{12+4j}+c_{13+4j})z^j,\\
     d_1&=\sum_{j=0}^{2}(4c_{10+4j}+3c_{11+4j}+3c_{12+4j})z^j,\\
     d_2&=\sum_{j=0}^{2}(4c_{11+4j}+3c_{12+4j}+3c_{13+4j})z^j,\\
     d_3&=\sum_{j=0}^{2}(2c_{10+4j}+2c_{11+4j}+c_{12+4j})z^j,\\
     d_4&=\sum_{j=0}^{2}(2c_{11+4j}+2c_{12+4j}+c_{13+4j})z^j.
    \end{align*}
     Clearly, $C_V(R) \cap M=1$. Therefore, $V=C_V(R) \rtimes M$.
    
    In the sequel, we show that $Z(V) \cong C_5^{9k}$. 
    
     If $v=1+\sum_{j=0}^2\sum_{i=1}^{4}(y^i-1)z^j(b_{i+4j}+b_{i+4j+12}x) \in C_V(y)=\{v \in V \mid vy=yv\}$, then  
    $$vy-yv=\sum_{i=1}^{4}\sum_{j=0}^{2}y(1-y^i)(y^3-1)b_{i+4j+12}z^jx.$$
    
    Thus $v \in C_V(y)$ if and only if  $b_{i}=b_{i+j}$ for $j= 1, 2, 3$ and $i=13, 17, 21$.
    Hence
   $$ C_V(y) =\{1+\sum_{j=0}^2\sum_{i=1}^{4}(y^i-1)c_{i+4j}z^j+\widehat{y}\sum_{j=0}^2c_{j+13}z^jx \mid c_i \in F\}.$$
    
    Since $Z(V) \subseteq C_V(y)$, 
    $$Z(V) =\{s\in C_V(y) \mid sv=vs \text{ for all } v \in V\}.$$
    Let $u=1+\sum_{j=0}^2\sum_{i=1}^{4}(y^i-1)c_{i+4j}z^j+\widehat{y}x\sum_{j=0}^2c_{j+13}z^j\in C_V(y)$. Since $v = 1+(y-1)zx \in V$ and $\widehat{y} \in Z(FG)$, $vu-uv=0$ yields
    $$(y-1)\sum_{i=1}^{4}\sum_{j=0}^{2}(y^i-y^{-i})c_{i+4j}z^{j+1}x = 0.$$
    Thus $c_i=c_{i+3}$ for $i=1, 5, 9$ and $c_{j}=c_{j+1}$ for $j=2, 6, 10$ and $u= 1+y^4(y-1)^2\sum_{j=0}^{2}d_{1+j}z^j+y^3(y^2-1)^2\sum_{j=0}^{2}d_{4+j}z^j+\widehat{y}\sum_{j=0}^{2}d_{7+j}z^jx$. Clearly $u \in Z(V)$.
   
    We conclude that $Z(V)=\{1+y^4(y-1)^2\sum_{j=0}^{2}d_{1+j}z^j+y^3(y^2-1)^2\sum_{j=0}^{2}d_{4+j}z^j+\widehat{y}\sum_{j=0}^{2}d_{7+j}z^jx \mid d_i \in F \} \cong C_5^{9k}$.
    	
\item If $p>5$, then $J(FG)=0$. Thus $FG$ is semisimple and $m=30$. As $G/G' \cong C_6$, we have 
	$$FG \cong FC_{6} \oplus \big (\oplus_{i=1}^l M(n_i, F_i)\big).$$
	Since $dim_F(Z(FG))=12$,  $l \leq 6$.
	Now we have the following cases:
	\begin{enumerate}
\item	If $q\equiv 1, -11$ mod $30$, then  $|S_F(\gamma_g)|=1$ for all $g\in G$. Therefore by  \cite[Theorem 4.1]{sh4} and \cite[Prop 1.2 and Theorem 1.3]{F1}, 
	$$FG \cong F^{6} \oplus \big(\oplus_{i=1}^6 M(n_i, F) \big)$$
	and $\sum_{i=1}^{6}n_i^2=24$.
	Clearly $n_i=2$ for $i\in \{1, 2, 3, 4, 5, 6\}$.
	Hence,
	\begin{center}
		$FG\cong F^{6} \oplus M(2,F)^6$.
	\end{center}
	
\item	If $q \equiv -1, 11$ mod $30$, then
	$|S_F(\gamma_g)|=1$ for $g=1, x, y, y^2$ and $|S_F(\gamma_g)|=2$ for $g=z, xz, yz, y^2z$. In this case $FC_6 \cong F^2 \oplus F^2_2$,  thus dimension constraints yield
	
	\begin{center}
		$n_1^2+n_2^2+2n_3^2+2n_4^2=24$.
	\end{center}
	We get $n_1=n_2=n_3=n_4=2$. Hence,
	\begin{center}
		$FG \cong F^2 \oplus F_2^2 \oplus M(2,F)^2 \oplus M(2, F_2)^2$.
	\end{center}
	
\item	If $q \equiv 7, 13$ mod $30$, then
	$T=\{1, 7, 13,  19\}$ mod $30$. Thus 
		$|S_F(\gamma_g)|=1$ for $g=1, x, z, z^2, xz, xz^2$ and $|S_F(\gamma_g)|=2$ for $g=y, yz, yz^2$.  Therefore,
	
	\begin{center}
		$2(n_1^2+n_2^2+n_3^2)=24$.
	\end{center}
	We get $n_1=n_2=n_3=2$. Hence,
	\begin{center}
		$FG \cong F^{6} \oplus M(2, F_2)^3$.
	\end{center}  
	
\item	If $q \equiv -7, -13$ mod $30$, then
	$T=\{1, 17, 19, 23\}$ mod $30$. Thus
		$|S_F(\gamma_g)|=1$ for $g=1, x$ and $|S_F(\gamma_g)|=2$ for $g=y, z,  xz, yz, yz^2$. 
	Hence,
	\begin{center}
		$FG \cong F^{2} \oplus F_2^2 \oplus M(2, F_2)^3$.
	\end{center}  
	
\end{enumerate}
\end{enumerate}
\end{proof}


\begin{thebibliography}{99}
\bibitem{sh10}  S. F. Ansari   and M.  Sahai,   Unit groups of  group algebras of groups of order  $20$, {\it  Quaest. Math.}, to appear. \url{https://doi.org/10.2989/16073606.2020.1727583.}
	
\bibitem{BK1} A. A. Bovdi  and J. Kurdics, Lie properties of the group algebra and the nilpotency class of the group of units, {\it J. Algebra} {\bf 212}(1) (1999), 28--64. 
	
\bibitem{F1} R. A. Ferraz, Simple components of the center of $FG/J(FG)$, {\it Comm. Algebra} {\bf 36}(9) (2008),  3191--3199. 
	
\bibitem{J5} J. Gildea,   Units of $F_{5^k}D_{10}$,  {\it Serdica Math. J.} {\bf 36} (2010), 247--254.
	
\bibitem{J12}  J. Gildea,  The structure of  $U(F_{5^k}D_{20})$, {\it Int. Electron. J. Algebra}  {\bf 8} (2010), 153--160.
		
\bibitem{N2} N. Makhijani, R. K. Sharma and  J. B. Shrivastava,  A Note on Units in $F_{p^m}D_{{2p^{n}}}$, {\it Acta Math. Acad. Paedagog. Nyha$ \acute{i}$zi} {\bf 30} (2014), 17--25.

\bibitem{N8}  N. Makhijani, R. K.  Sharma  and J. B. Srivastava,  The unit groups of some special semisimple group algebras, {\it Quaest. Math.} {\bf 39}(1)  (2016), 9--28.

\bibitem{Miles} C. P. Milies  and  S. K. Sehgal, {\it An Introduction to Group Rings},   Kluwer Academic Publishers, Dordrecht, (2002).

\bibitem{FM} F. Monaghan, Units of some group algebras of non-abelian groups of order $24$ over any finite field of characteristic $3$,  {\it Int. Electron. J. Algebra} {\bf 12} (2012), 133--161.

\bibitem{sh4} M.  Sahai and S. F. Ansari, Unit groups of finite group algebras of abelian groups of order at most $16$, {\it Asian-Eur. J. Math.} (2020), [2150030](17 pages)   \url{https://doi.org/10.1142/S1793557121500303.} 
	
\bibitem{sh13} M. Sahai and  S. F. Ansari, The group of units of the group algebras of groups $D_{30}$ and $C_3 \times D_{10}$ over a finite field of characteristic $3$, {\it Int. Electron. J. Algebra} {\bf 29} (2021), 165--174.
		
\bibitem{T2} G. Tang,  Y. Wei and Y. Li,   Unit groups of group algebras of some small groups, {\it Czechoslovak Math. J.},  {\bf 64}(1)  (2014),  149-157.	
	\end{thebibliography}
\end{document}